\newtheorem{theorem}{Theorem}
\newtheorem{lemma}{Lemma}
\theoremstyle{definition}
\title{\textbf{\LARGE On Traczyk's BCK-sequences}} 
\author{ Denis Zelent} 
 \date{}
\begin{document}
\maketitle

\noindent
\textbf{Abstract.} {\footnotesize 
BCK-sequences and $n$-commutative BCK-algebras were introduced by T. Traczyk, together with two related problems. The first one, whether BCK-sequences are always prolongable. The second one, if the class of all $n$-commutative BCK-algebras is characterised by one identity. W. A. Dudek proved that the answer to the former question is positive in some special cases, e.g. when BCK-algebra is linearly ordered. T. Traczyk showed that the answer to the latter is affirmative for $n=1,2$. Nonetheless, by providing counterexamples, we proved that the answers to both those open problems are negative.\\ 
}
{\textsf{2010 Mathematics Subject Classification:} 03G25; 06F35; 08B99.}\\
{\textsf{Keywords:} BCK-algebra; BCK-sequence; variety.} 

\bigskip\bigskip\bigskip
Various types of BCK-algebras – as algebras strongly connected with nonclassical
propositional calculi – are studied by many authors. A short survey
of basic results on BCK-algebras can be found in the book \cite{book}. The class of BCK-algebras is not a variety, but, for example, the class of finite BCK-algebras is solvable \cite{solv}. Such BCK-algebras have important applications in coding theory \cite{Flaut2} (see also \cite{Flaut} and \cite{Saeid}). For this reason people are looking for new ways of defining various classes of BCK-algebras to make this study easier, as in e.g. \cite{Evans}, where the method of rooted trees is used to construct commutative BCK-algebras.
In these studies, it is important whether a given class of BCK-algebras can be defined with a small number of simple identities. T. Traczyk showed \cite{Traczyk88} that for $n = 1$ and $n = 2$ the class of all $n$ -commutative BCK algebras can be defined with only one identity. We will show that for $n> 2$ this is, unfortunately, no longer the case.

By a BCK-algebra we mean an algebra of the form $(X,\cdot,0)$, where $X$ is the non-empty set with a designated element $0$ and the dot as a binary operation satisfying the following axioms:
$$
\begin{array}{lllllll}
(1) \ \ ((x\cdot y)\cdot(z \cdot y)) \cdot (x\cdot z) = 0,& \ \ \ \ &(2) \ \   (x\cdot(x\cdot y))\cdot y = 0,\\
(3) \ \  x\cdot 0 = x,&&(4) \ \   0\cdot x = 0,\\
(5) \ \ x\cdot y = y\cdot x = 0 \implies x=y.
\end{array}
$$
Then also $x\cdot x=0$. Moreover, any BCK-algebra $(X,\cdot,0)$ is partially ordered by the relation $\leq$ defined by
 \begin{equation}\nonumber
    x\leq y \text{ iff } x\cdot y = 0.
\end{equation}

 A congruence $\rho$ defined on a partially ordered algebra $(X,\cdot,\leq)$ is convex if and only if for all $x,y,z\in X$ from $x\leq y\leq z$ and $(x,z)\in \rho$ it follows $(x,y)\in\rho$ (cf. \cite{Fuks}). In the case of BCK-algebras, a congruence $\rho$  is convex if and only if $(x\cdot y,0)\in\rho$  together with $(y\cdot x,0)\in\rho$ imply $(x,y)\in\rho$ (cf. \cite{Traczyk85}). H. Yutani proved in \cite{Yutani} that all congruences of a finite BCK-algebra are convex. T. Traczyk obtained (cf. \cite{Traczyk88}) a more general result: a BCK-algebra in which every strongly decreasing (with respect to $\leq$) sequence of elements is finite has only convex congruences. This prompted T. Traczyk to study BCK-algebras in which certain sequences stabilise from a certain point. Such algebras are e.g. {\em $n$-commutative BCK-algebras}, i.e. BCK-algebras, in which $n$ is a minimal integer for which for every two elements $x_0,x_1$ such that $x_1\leq x_0$ we have $x_n =x_{n+1}$, where $x_k =x_{k-2}\cdot(x_{k-2}\cdot x_{k-1})$ for $k=2,3,\ldots$ The class $\mathbf{V}_n$ of all $n$-commutative BCK-algebras is a variety and $\mathbf{V}_n\ne\mathbf{V}_{n+1}$ (cf. \cite{Traczyk88}). Moreover, if for arbitrary $x,y$ in a given BCK-algebra we define two \textsl{BCK-sequences} $x_0, x_1, x_2, \dots$ and $y_o, y_1, y_2, \dots$ by

\medskip
\begin{tabular}{cl}
(6)&    $x_0=x, \ x_1 = y\cdot (y\cdot x),  \dots, x_k = x_{k-2}\cdot (x_{k-2}\cdot x_{k-1}), \dots$\\[1mm]
(7)&$    y_0=y, y_1 = x\cdot (x\cdot y), \dots, y_k = y_{k-2}\cdot (y_{k-2}\cdot y_{k-1}), \dots$
\end{tabular}

\medskip
\noindent
for $k=2,3,\ldots$ 

Then 

\bigskip

\begin{tabular}{lllll}
\begin{tabular}{cl}
(8)&$x_{0}\ge y_{1}\ge x_{2}\ge y_{3}$,\\[1mm]
(9)&$y_{0}\ge x_{1}\ge y_{2}\ge x_{3}$.
\end{tabular}
\\
&\hspace*{20mm}&

\unitlength=0.5mm
\begin{picture}(80,20)
\put(0,0){\line(0,1){60}} \put(30,0){\line(0,1){60}}
\put(0,0){\circle*{3.5}} \put(0,0){\line(3,2){30}}
\put(30,0){\circle*{3.5}} \put(30,0){\line(-3,2){30}}
\put(-11,-1){$x_3$} \put(35,-1){$y_3$}
\put(0,20){\circle*{3.5}} \put(0,20){\line(3,2){30}}
\put(30,20){\circle*{3.5}} \put(30,20){\line(-3,2){30}}
\put(-11,19){$x_2$} \put(35,19){$y_2$}
\put(0,40){\circle*{3.5}} \put(0,40){\line(3,2){30}}
\put(30,40){\circle*{3.5}} \put(30,40){\line(-3,2){30}}
\put(-11,39){$x_1$} \put(35,39){$y_1$}
\put(0,60){\circle*{3.5}} \put(30,60){\circle*{3.5}}
\put(-11,59){$x_0$} \put(35,59){$y_0$}
\end{picture}

\end{tabular}

\bigskip
The variety $\mathbf{V}_1$ is characterised by the identity $x_1=y_1$; the variety $\mathbf{V}_2$ by the identity $x_2=y_2$ (cf. \cite{Traczyk88}).

Due to this fact, T. Traczyk posed in \cite{Traczyk88} the following two questions:

\medskip
\noindent
{\sc Question 1.} {\em Can the sequences $(8)$ and $(9)$ always be prolonged?}

\medskip
\noindent
{\sc Question 2.} {\em  Is the variety $\mathbf{V}_n$ characterised by the identity $x_n=y_n$?}

\medskip
As for the first question, a partial answer was given by W.A. Dudek. Namely, he proved in \cite{Unsolved} that prolongation of $(8)$ and $(9)$ is possible in BCK-algebras satisfying the identity $x\cdot(x\cdot y)=y\cdot(y\cdot x)$ and in BCK-algebras that are linearly ordered. He also gave an example of a BCK-algebra with infinite strongly decreasing sequences $(8)$ and $(9)$. Nevertheless, the answer to Question 1 is negative. 

\begin{theorem}
For every $n\geq 6$ there are at least two BCK-algebras of order $n$ for which the sequences $(8)$ and $(9)$ cannot be prolonged.
\end{theorem}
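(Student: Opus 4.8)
The plan is to prove the theorem by exhibiting explicit finite counterexamples, together with a construction that lifts them to every order $n\geq 6$. First I would record a reduction that makes ``non‑prolongability'' a finitary matter. In any BCK‑algebra, axiom $(2)$ gives $a\cdot(a\cdot b)\le b$, so from $(6)$ and $(7)$ one gets $x_{0}\ge x_{1}\ge x_{2}\ge\cdots$ and $y_{0}\ge y_{1}\ge y_{2}\ge\cdots$; hence in a \emph{finite} BCK‑algebra both sequences are eventually constant, say $x_{k}=a$ and $y_{k}=b$ for all large $k$. If $(8)$ could be continued to an infinite descending chain $x_{0}\ge y_{1}\ge x_{2}\ge y_{3}\ge x_{4}\ge\cdots$, then reading it far out it would contain $a\ge b\ge a$, forcing $a=b$ by axiom $(5)$; the same applies to $(9)$. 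So it suffices to produce, for each $n\ge 6$, two non‑isomorphic BCK‑algebras of order $n$, each carrying a pair $x,y$ whose BCK‑sequences $(6)$, $(7)$ stabilise at two distinct (indeed incomparable) elements $a\ne b$.

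Next I would treat the base case $n=6$. I exhibit two BCK‑algebras $\mathbf{A}$ and $\mathbf{B}$ on $\{0,1,2,3,4,5\}$ by writing down their Cayley tables explicitly, together with a designated pair $x,y$ in each; one then checks directly that $(1)$--$(5)$ hold — axiom $(1)$ being the laborious one, as it is ternary — and computes the first several terms of $(6)$ and $(7)$, verifying that the $x$‑sequence and the $y$‑sequence reach and then remain at two incomparable elements. I would also record the induced partial orders, chosen (as one can arrange) so that neither $\mathbf{A}$ nor $\mathbf{B}$ has a greatest element — each having at least two maximal elements — and so that $\mathbf{A}\not\cong\mathbf{B}$; this is used in the extension step. (Such tables are located by a short search through BCK‑algebras of small order; order $6$ is in fact the least at which they occur, but the theorem only needs their existence.)

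For $n>6$ I would pass from order $6$ to order $n$ by adjoining a chain of $n-6$ new ``top'' elements. Given a BCK‑algebra $(X,\cdot,0)$, form $X^{+}=X\cup\{\top\}$ with $\top\cdot u=\top$ for $u\in X$, $u\cdot\top=0$ for all $u\in X^{+}$, and $\top\cdot\top=0$; a routine case check shows $X^{+}$ is again a BCK‑algebra, $\top$ is its unique maximum, and $X$ is a subalgebra of $X^{+}$. Iterating this $n-6$ times and applying it to $\mathbf{A}$ and to $\mathbf{B}$ gives BCK‑algebras $\mathbf{A}_{n},\mathbf{B}_{n}$ of order $n$. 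The pair $x,y$ still lies in the subalgebra $\mathbf{A}$ (resp. $\mathbf{B}$), so its BCK‑sequences are computed inside the old algebra and are unchanged; hence they still stabilise at incomparable elements and cannot be prolonged. Finally, the $n-6$ adjoined elements are precisely those obtained by repeatedly deleting the greatest element, and beneath them sits a poset with no greatest element — an isomorphism invariant — so $\mathbf{A}_{n}\cong\mathbf{B}_{n}$ would force $\mathbf{A}\cong\mathbf{B}$, which fails. Thus $\mathbf{A}_{n},\mathbf{B}_{n}$ are the two required algebras (for $n=6$ these are just $\mathbf{A},\mathbf{B}$).

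The main obstacle is entirely in the base step, and is twofold: \emph{finding} the two order‑$6$ tables whose sequences $(6)$--$(7)$ exhibit exactly the right drift (this is where the content lies, and is realistically done with computer assistance), and then the bookkeeping of checking axiom $(1)$ for them and confirming the stabilisation values — mechanical but voluminous. The extension and the non‑isomorphism argument are comparatively soft; the only care needed is to choose the bases without a greatest element so that the adjoined chain is canonically recognisable.
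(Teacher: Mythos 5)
Your skeleton coincides with the paper's: two explicit order-$6$ counterexamples found by computer search, followed by an inductive step that adjoins a new greatest element $\top$ with $\top\cdot u=\top$ for $u$ in the old algebra and $u\cdot\top=0$ — this is precisely the extension the paper uses, and the old pair's sequences are indeed unchanged because the old algebra is a subalgebra. Your preliminary reduction (each of $(x_k)$, $(y_k)$ is non-increasing by axiom $(2)$, hence eventually constant in a finite algebra, and an infinite continuation of $(8)$ or $(9)$ would force the two limits to coincide by antisymmetry) is correct, and your argument that the extensions of non-isomorphic bases stay non-isomorphic — strip greatest elements until none remains, which is legitimate since $x\cdot y\le x$ makes the complement of the top a subalgebra — is actually more careful than the paper, which leaves that point implicit. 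But there is a genuine gap exactly where the theorem's content lies: you never exhibit the two order-$6$ Cayley tables. Their existence does not follow from any general principle — Dudek proved that prolongation always works in linearly ordered BCK-algebras and in those satisfying $x\cdot(x\cdot y)=y\cdot(y\cdot x)$, so the examples are genuinely special and must be displayed and verified (axioms $(1)$–$(5)$ and the sequence computations). Deferring this to "a short search" leaves the base case, and hence the theorem, unproved; the paper supplies the two tables explicitly and checks, e.g., that for the maximal pair $x_0=4$, $y_0=5$ of its first table the sequences stabilise at $1$ and $2$ and the required inequality fails.

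A further, smaller misdirection: your reduction needs only that the two limits $a,b$ be \emph{distinct}, yet you insist on finding tables where they are \emph{incomparable}. The paper's order-$6$ examples do not have incomparable limits (they stabilise at $1$ and $2$, respectively at $1$ and $0$, which are comparable), so a search constrained by incomparability could come up empty at order $6$ even though adequate algebras exist; weaken that requirement to distinctness. Your parenthetical claim that order $6$ is minimal is likewise unproved, though not needed. With the two explicit tables inserted (and "incomparable" relaxed to "distinct"), your proof would be complete and essentially the same as the paper's, with the added benefit that your stabilisation lemma explains uniformly why non-prolongability persists under the extension, and your greatest-element-deletion argument certifies "at least two" algebras at every order $n\geq 6$.
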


\begin{proof}Consider two non-isomorphic BCK-algebras:

\medskip\noindent
\begin{minipage}[c]{0.5\textwidth}
\centering\small
\begin{tabular}{c|cccccc} 
$\cdot$&0&1&2&3&4&5\\\hline
0& 0& 0& 0& 0& 0& 0\\
1& 1& 0& 0& 0& 0& 0\\
2& 2& 1& 0& 0& 0& 0\\
3& 3& 1& 1& 0& 0& 0\\
4& 4& 2& 1& 1& 0& 1\\
5& 5& 3& 2& 1& 1& 0\\
\end{tabular}
\captionof{table}{}
\label{table1}
\end{minipage}
\begin{minipage}[c]{0.5\textwidth}
\centering\small
\begin{tabular}{c|cccccc} 
$\cdot$&0&1&2&3&4&5\\\hline
0& 0& 0& 0& 0& 0& 0\\
1& 1& 0& 0& 0& 0& 0\\
2& 2& 2& 0& 0& 0& 0\\
3& 3& 2& 1& 0& 1& 1\\
4& 4& 4& 4& 4& 0& 0\\
5& 5& 4& 4& 4& 1& 0\\
\end{tabular}
\captionof{table}{}
\label{table2}
\end{minipage}
\\
\newline

They were found as counterexample to Question 1 using computer program written by the author.

The BCK-algebra from Table \ref{table1} has two maximal elements (with respect to $\leq$): $x_0=4$ and  $y_0=5$. For these elements, using $(6)$ and $(7)$, we obtain:
$$
\begin{array}{lllllllll}
x_0=4&x_1=3,&x_2=2,&x_k=1&{\rm for} \ k\geq 3\\[3pt]
y_0=5,&y_1=2,&y_2=2& y_k=2&{\rm for} \ k\geq 3.
\end{array}
$$
Thus $(8)$ and $(9)$ have the form
$$
x_0=4\geq 2\geq 2\geq 2, \ \ \ \ \ y_0=5\geq 3\geq 2\geq 1
$$
and  cannot be prolonged because $y_3\cdot x_4 = 1$, i.e,  $y_3 \nleq x_4$.

The BCK-algebra from Table \ref{table2} also has two maximal elements (with respect to the order $\leq$): $x_0=3$ and $y_0=5$. For these elements we have\\
$$
\begin{array}{rlll}
    x_0=3,& x_k=1 \text{ for } k\geq 1,\\[3pt]
    y_0=5,& y_1=2, \ y_2=1, \ y_k=0 \text{ for } k\geq 3.
\end{array}
$$
Since $x_3\cdot y_4 = 1$, these sequences cannot be prolonged. Thus, for $n = 6$, there are two BCK-algebras with BCK-sequences that cannot be prolonged. 

Now let $(G_n,\cdot,0)$ be an arbitrary BCK-algebra of order $n\geq 6$. Consider the set $G_{n+1}=G_n\cup\{n\}$ and the multiplication
$$
x*y=\left\{\begin{array}{clll}
x\cdot y&{\rm for} \ x,y\in G_n,\\
0&{\rm for} \ x\in G_{n+1}, \ y=n,\\
n&{\rm for} \ x=n, \ y\in G_n.
\end{array}\right.
$$
It is not difficult to verify that $(G_{n+1},*,0)$ is a BCK-algebra of order $n+1$ and $(G_n,\cdot,0)$ is its BCK-subalgebra. 

If $G_6$ is a BCK-algebra defined by Table \ref{table1} (or by Table \ref{table2}), then $G_7$ is a BCK-algebra in which the sequences $(8)$ and $(9)$ initiated by $x_0=4$, $y_0=5$ (respectively, by $x_0=3$, $y_0=5$) cannot be prolonged. By induction, these sequences cannot be prolonged in each BCK-algebra $G_{n+1}$, $n\geq 6$.
\end{proof}

\begin{lemma}\label{L-1}
The set $X_n=\{0,1,2,\ldots,n-1\}$,  $n\geq 5$, with the operation
$$x* y=\left\{\begin{array}{llll}
0&{ for} & x\leq y,\\
x&{for} &y=0,\\
1&{for} & x=y+1,\\
x-y-1&{for} & x-y-1>0
\end{array}\right.
$$
is a BCK-algebra linearly ordered by the natural order of non-negative integers.
\end{lemma}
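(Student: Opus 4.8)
The plan is to verify directly that $(X_n, *, 0)$ satisfies the five BCK-axioms, using the fact that the proposed order coincides with the natural order on $\{0,1,\dots,n-1\}$. First I would record the basic arithmetic features of the operation: $x * y = 0$ exactly when $x \le y$, so axiom (4) ($0 * x = 0$) and the identity $x * x = 0$ are immediate, and $x * 0 = x$ gives axiom (3). For $x > y$ one has $x * y \in \{1, 2, \dots\}$ and, more precisely, $x * y = \max(1, x - y - 1)$ — this single formula unifies the ``$x = y+1$'' and ``$x - y - 1 > 0$'' cases and will make the computations uniform. From $x * y = 0 \iff x \le y$ one gets axiom (5) (antisymmetry) for free, and one checks that $\le$ as defined by the BCK-order is exactly the natural linear order, which is the last clause of the statement.

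The substantive work is axioms (1) and (2). For axiom (2), $(x * (x * y)) * y = 0$, I would split on whether $x \le y$ (then $x * y = 0$, so $x * (x*y) = x * 0 = x \le y$, done) or $x > y$; in the latter case $x * y = \max(1, x-y-1)$, and since this quantity is $\le x - 1 < x$ one has $x * (x*y) = \max(1, x - (x*y) - 1)$, and a short estimate shows $x - (x*y) - 1 \le y$, hence $x*(x*y) \le y$ and the product with $y$ vanishes. The key inequality $x - \max(1,x-y-1) - 1 \le y$ breaks into the two subcases $x - y - 1 \le 1$ and $x - y - 1 > 1$, both elementary.

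Axiom (1), $((x*y)*(z*y)) * (x*z) = 0$, is the main obstacle: it is the genuinely three-variable condition and requires showing $(x*y) * (z*y) \le x * z$ in the natural order for all $x, y, z$. I would organize the argument by the ordering of $y$ relative to $x$ and $z$. If $x \le y$ then $x * y = 0$ and the left side is $0$, done; similarly if $x \le z$ the right side is $0$ but then also $x*y \le z*y$ (the map $t \mapsto t * y$ is order-preserving on the part above $y$, which one checks from the $\max(1,t-y-1)$ formula), so again the whole expression is $0$. The remaining case is $x > y$ and $x > z$; here one writes everything via $\max(1, \cdot)$ and must compare $\bigl(\max(1,x-y-1)\bigr) * \bigl(\max(1,z-y-1)\bigr)$ with $\max(1, x - z - 1)$. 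The cleanest route is to prove the monotonicity/contraction facts as small lemmas first: (a) $a \le b \implies a * c \le b * c$, (b) $a * c \le a$, and (c) $(a * c) \le \max(1, a - b - 1) $ type bounds — essentially that subtracting then capping at $1$ is subadditive enough — and then deduce (1) by chaining. I expect the bookkeeping over the subcases where some intermediate value hits the floor $1$ to be the fussy part, but each subcase reduces to an inequality among integers that holds because $-1$ is subtracted ``once too often'' on the left-hand side compared to the right, which is exactly what makes the floor harmless. Finally I would remark that the hypothesis $n \ge 5$ is only needed so that enough distinct values occur for the algebra to be nondegenerate and for later constructions; the axiom verifications themselves go through for all $n$.
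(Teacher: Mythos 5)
Your overall strategy --- direct case-by-case verification of axioms (1) and (2), with monotonicity of $t\mapsto t*y$ used to dispose of the case $x\le z$ --- is essentially the paper's, and that dispatch of $x\le z$ is in fact a slightly cleaner organization than the paper's explicit computation for its case $y<x<z$. The concrete flaw is the ``unified'' formula $x*y=\max(1,x-y-1)$ for $x>y$: it is false whenever the second argument is $0$, since the definition (and axiom (3)) forces $x*0=x$. You note $x*0=x$ when checking (3), but then apply the max formula throughout the branch $x>y$. This matters in several places. In axiom (2), for $y=0<x$ the inner value is $x*0=x$, not $x-1$, and even granting your estimate $x-(x*y)-1\le y$, the step ``hence $x*(x*y)\le y$'' fails at $y=0$ because the outer value is floored at $1$. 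In axiom (1) the same problem occurs when $y=0$ or $z=0$, and also \emph{internally} in your main case: $z*y=0$ whenever $z\le y$, so there $(x*y)*(z*y)$ equals $x*y$, not $\max(1,x*y-1)$. All of these zero-second-argument situations are trivial with the correct operation (e.g.\ $x*(x*0)=x*x=0$, and for $z=0$ condition (1) reduces to $(x*y)*x=0$), and your monotonicity claim survives since $t\mapsto t*0=t$ is increasing, but these cases must be split off explicitly, exactly as the paper does.

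The second gap is that the heart of the lemma --- axiom (1) in the remaining case $x>y$, $x>z$ with $y,z\ge 1$ --- is only promised, not proved: you state lemmas (a)--(c) loosely and ``expect'' the floor-at-$1$ bookkeeping to close. It does close, along the lines you indicate: for $z\le y$ you need antitonicity in the second argument (a fact distinct from your (a), so state it), which gives $(x*y)*(z*y)=x*y\le x*z$; for $y<z<x$ write $x-y-1=(z-y-1)+(x-z-1)+1$ and check the three subcases $z=y+1$, $x=z+1$, and both differences at least $2$, each of which reduces to an equality or to $\max(1,c-1)\le\max(1,c)$. These are precisely the computations the paper carries out in its cases $z<y<x$ and $0<y<z<x$; until they and the zero-argument exclusions above are written out, your proposal is a viable plan rather than a proof. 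Your closing remark that $n\ge5$ is not needed for the axiom checks themselves is correct.
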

\begin{proof}
Because axioms $(3)$, $(4)$ and $(5)$ are trivial, we  will check only axioms $(1)$ and $(2)$. For $x=0$ or $y=0$ the condition $(1)$ is valid for each $z\in X_n$. Substituting $z=0$ we can reduce it to $(x* y)*x=0$, which is true for $x\leq y$. If $y>x$, then $(x* y)* x=1* x=0$ for $x=y+1$, and $(x*y)*x=(x-y-1)*x=0$ otherwise. Thus, it is true for $z=0$. It is also true when it contains only two different elements. 

The remaining case is when $x, y, z$ are three different non-zero elements. The cases $x<y<z,$ \ $x<z<y$ and $z<x<y$ are trivial. 

Let $A=((x*y)*(z*y))*(x*z)$. If $z<y<x$, then $y\geq z+1$, $x\geq z+2$. Hence $x*z=x-z-1>0.$  Thus, 
$A=(x*y)*(x-z-1)=0$  for $x=y+1$. For $x>y+1$ we have $A=(x-y-1)*(x-z-1)=0$ since $x-y<x-z$.
So, in this case $(1)$ is satisfied.
 
If $y<x<z$, then $x\geq y+1$, $z\geq y+2$ and $z*y>0$.
Thus $A=1*(y*z)=0$ for $x=y+1$, and $A=(x-y-1)*(z-y-1)=0$ for $x>y+1$ since $x-y<z-y$. Hence, in this case, $(1)$ is satisfied as well.
 
Now let $0<y<z<x$, meaning $x-y-1>0$. 

For $z=y+1$, $A=((x-y-1)*1)*(x*z)=0$  if $x-y-1=1$ or $x-y-1=2$. If $x-y-1=t\geq 3$, then $x-z=t$. Hence, $A=(t*1)*(x*z)=(t-3)*(t-1)=0$. 

For $z=y+k$, $k>1$, we have $x-y-1=k+t-1>0$, $z-y-1=k-1>0$, $A=((k+t-1)*(k-1))*(x*z)=0$, if $t=1$. If $t>1$, then $A=((k+t-1)*(k-1))*(t-1)=(t-1)*(t-1)=0$. So $(1)$ is satisfied for every case.

To prove $(2)$, let us observe that for any $x\leq y$ as well as for $y=0$, the axiom is always satisfied. For $x=y+1$ we have $(x*(x*y))*y=((y+1)*1)*y=0$. For $x=y+k$, $k>1$, we have $((y+k)*(k-1))*y=y*y=0$.

This completes the proof.
\end{proof}

\begin{lemma} \label{BCK-Vn}
Let $(X_n,\cdot,0)$ be as in the previous lemma. For every  $n\geq 5$, the algebra $(X'_n,*,0)$, where $X'_n=X_n\cup\{n\}$ and 

\begin{minipage}[c]{0.7\textwidth}
\centering
$
x\cdot y=\left\{\begin{array}{llll}
x*y&for&x,y\in X_n,\\
n&for&x=n, \ y=0,\\
n-y-1&for&x=n, \ y\in X_n-\{0\},\\
0&for&x\in X'_n-\{n-1\}, \ y=n,\\
1&for&x=n-1, \ y=n.
\end{array}\right.
$
\end{minipage}
\begin{minipage}[c]{0.3\textwidth}
\centering
       \begin {tikzpicture}
\tikzset{enclosed/.style={draw, circle, inner sep=0pt, minimum size=.1cm, fill=black}}
\node[enclosed, label={left: \scriptsize $0$}] (A) at (0,0) {};
\node[enclosed, label={left: \scriptsize $1$}] (B) at (0,0.5) {};
\node[enclosed, label={left: \scriptsize $2$}] (C) at (0,1) {};
\node[enclosed, label={left: \scriptsize $n-3$}] (G) at (0,1.5) {};
\node[enclosed, label={left: \scriptsize $n-2$}] (D) at (0,2) {};
\node[enclosed, label={left: \scriptsize $n-1$}] (E) at (-0.5,2.4) {};
\node[enclosed, label={right: \scriptsize $n$}] (F) at (0.5,2.4) {};
\path (A) edge [] node[] {} (B);
\path (B) edge [] node[] {} (C);
\path (G) edge [] node[] {} (D);
\path (D) edge [] node[] {} (E);
\path (D) edge [] node[] {} (F);
\draw[dashed] (0,1) -- (0,1.5);
\end{tikzpicture}
\end{minipage}

\noindent 
is a BCK-algebra of order $n+1$.
\end{lemma}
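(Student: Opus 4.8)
The plan is to verify directly that $(X'_n,\cdot,0)$ satisfies the five BCK-axioms, exploiting the fact that by Lemma~\ref{L-1} the substructure $(X_n,*,0)$ is already a BCK-algebra linearly ordered by the natural order. First I would record the induced order on $X'_n$: the new element $n$ sits above $n-2$ (since $n\cdot(n-2)=n-(n-2)-1=1\ne 0$, but $n\cdot(n-1)$ — wait, we need $n$ versus $n-1$) and is incomparable with $n-1$, while $(n-1)\cdot n=1\ne 0$ and $n\cdot(n-1)=n-(n-1)-1=0$, so in fact $n\leq n-1$ fails and $n-1\leq n$ fails; both $n-1$ and $n$ cover $n-2$, giving exactly the Hasse diagram drawn. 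Axioms $(3)$ and $(4)$ are immediate from the defining cases ($x\cdot 0=x$ for all $x$ including $x=n$, and $0\cdot y=0$ since $0\leq y$ for $y\in X_n$ and $0\cdot n=0$). Axiom $(5)$ needs only the pair $\{n-1,n\}$ beyond what Lemma~\ref{L-1} already covers, and there $(n-1)\cdot n=1\ne 0$, so the hypothesis is never met; hence $(5)$ holds vacuously on the new pair.

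Next I would turn to axiom $(2)$, $(x\cdot(x\cdot y))\cdot y=0$, splitting on whether any of $x,y$ equals $n$. If $x,y\in X_n$ it is Lemma~\ref{L-1}. If $y=n$: for $x\in X'_n\setminus\{n-1\}$ we get $x\cdot n=0$, so $x\cdot(x\cdot n)=x\cdot 0=x$ and $x\cdot n=0$, done; for $x=n-1$, $x\cdot n=1$, so $(n-1)\cdot((n-1)\cdot n)\cdot n=((n-1)\cdot 1)\cdot n=1\cdot n=0$ (using $1\leq n$ in the order, i.e. $1\cdot n=0$). If $x=n$, $y\in X_n\setminus\{0\}$: then $x\cdot y=n-y-1$, and I would compute $n\cdot(n-y-1)$ — if $n-y-1=0$ this is $n$, then $n\cdot y=n-y-1=0$; if $n-y-1\geq 1$ this equals $n-(n-y-1)-1=y$, then $y\cdot y=0$. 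So $(2)$ holds throughout.

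The main obstacle is axiom $(1)$, $((x\cdot y)\cdot(z\cdot y))\cdot(x\cdot z)=0$, which requires a case analysis over which of $x,y,z$ equal $n$ and, when none does, reduces to Lemma~\ref{L-1}. The genuinely new cases are: (a) $z=n$, where $x\cdot z,\ y\cdot z\in\{0,1\}$ collapses the expression to something controlled by the already-established order relations; (b) $y=n$, where $x\cdot y,z\cdot y\in\{0,1\}$ similarly simplifies; and (c) $x=n$ with $y,z\in X_n$, the hardest subcase, where $x\cdot y=n-y-1$ and $x\cdot z=n-z-1$ behave — up to the off-by-one shift and the special value $1$ when $y=n-1$ or $z=n-1$ — like an element of the linearly ordered $X_{n}$; here I would argue that the map $y\mapsto n\cdot y$ is order-reversing from $X_n$ into $X_{n-1}\subseteq X_n$ and that the resulting instance of $(1)$ is again an instance of $(1)$ in the original $X_n$-algebra, or else dispose of the boundary values $y,z\in\{n-2,n-1\}$ by hand. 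I expect that after observing $n\cdot y\leq n-1$ always and that $n\cdot(n-2)=1$, $n\cdot(n-1)=0$, all remaining instances reduce to a handful of concrete computations. Since the paper says this ``is not difficult to verify,'' I would present the order description, note that $(3),(4),(5)$ and the $y\in\{0\}$ or $x,y,z\in X_n$ instances are immediate or follow from Lemma~\ref{L-1}, work out $(2)$ as above, and then carry out the three families of cases for $(1)$ in compressed form, flagging case (c) with $x=n$ as the one meriting the most care.

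\begin{proof}[Proof sketch of plan]
One checks the five BCK-axioms on $(X'_n,\cdot,0)$ directly, using Lemma~\ref{L-1} for all instances confined to $X_n$. The element $n$ covers $n-2$ and is incomparable with $n-1$, matching the displayed diagram. Axioms $(3)$, $(4)$ hold by inspection of the defining cases, and $(5)$ holds because $(n-1)\cdot n=1\ne 0$ means the new incomparable pair never triggers its hypothesis. For $(2)$ the cases $y=n$, $x=n$, and $x,y\in X_n$ are each a short computation (or Lemma~\ref{L-1}). For $(1)$ one splits on which variables equal $n$: when $z=n$ or $y=n$ the factors $x\cdot n,z\cdot n,x\cdot y,z\cdot y$ lie in $\{0,1\}$ and the identity collapses under the known order; when $x=n$ (and $y,z\in X_n$) the values $n\cdot y=n-y-1$, $n\cdot z=n-z-1$ reduce the instance to one of $(1)$ inside $X_n$ after accounting for the boundary values $y$ or $z\in\{n-2,n-1\}$. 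This exhausts all cases.
\end{proof}
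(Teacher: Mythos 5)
Your plan follows the paper's strategy in outline (verify the axioms directly, use Lemma~\ref{L-1} for triples inside $X_n$, split on which argument equals $n$), but the crux of the verification is exactly the part you defer, and the one concrete idea you offer for it does not work as stated. For your case (c) ($x=n$, $y,z\in X_n$) you propose that the instance of $(1)$ ``is again an instance of $(1)$ in the original $X_n$-algebra''; it is not: $n\cdot y=n-y-1$ for $1\le y\le n-2$, whereas the top element $n-1$ of $X_n$ gives $(n-1)*y=n-y-2$, so no element of $X_n$ reproduces the action of $n$, and the mismatch is not confined to the boundary values $y,z\in\{n-2,n-1\}$. Likewise in your case (a) ($z=n$): when $x\ne n-1$ the identity reduces to the inequality $x\cdot y\le n\cdot y$, which is not among the ``already-established order relations'' (you only located $n$ in the Hasse diagram); it is precisely the auxiliary fact the paper isolates and proves separately (its $(11)$, used together with the antitonicity $(10)$ of $y\mapsto n\cdot y$ in the $x=n$ case). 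So the cases of $(1)$ that genuinely require an argument are left unproven in your sketch, and the proposed shortcut would fail.

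There is also a real inconsistency in your order analysis that matters for the truth of the statement. From the displayed formula you compute $n\cdot(n-1)=n-(n-1)-1=0$ and then assert that $n\le n-1$ fails; but $n\cdot(n-1)=0$ means exactly $n\le n-1$. Worse, with the value $0$ the lemma is false: for $n=5$ one gets $((5\cdot 1)\cdot(4\cdot 1))\cdot(5\cdot 4)=(3\cdot 2)\cdot 0=1\ne 0$, violating $(1)$. The paper's Tables 3--4, its Hasse diagram, and its proof all use $n\cdot(n-1)=1$ (the clause $n-y-1$ is intended only when $n-y-1>0$, in parallel with Lemma~\ref{L-1}), and the paper's verification explicitly relies on $n\cdot y\ge 1$ for all $y\ne n$. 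Your sketch silently uses both values: $n\cdot(n-1)=0$ in your axiom-$(2)$ computation, incomparability of $n$ and $n-1$ in your order description. A correct write-up must first fix the operation to the intended one, then redo the $x=n$ case of $(2)$ (where the paper's one-line computation $(n\cdot(n\cdot y))\cdot y=y\cdot y=0$ suffices, and where your intermediate value $(n-1)\cdot 1=1$ should be $n-3$), and finally carry out the three case families of $(1)$ with the help of the monotonicity facts $(10)$ and $(11)$ rather than by an appeal to a reduction into $X_n$.
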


Two examples of such constructed BCK-algebras are shown below:

\medskip\noindent
\begin{minipage}[c]{0.5\textwidth}
\centering
\begin{tabular}{c|cccccc} 
$\cdot$&0&1&2&3&4&5\\\hline
0& 0& 0& 0& 0& 0& 0\\
1& 1& 0& 0& 0& 0& 0\\
2& 2& 1& 0& 0& 0& 0\\
3& 3& 1& 1& 0& 0& 0\\
4& 4& 2& 1& 1& 0& 1\\
5& 5& 3& 2& 1& 1& 0\\
\end{tabular}
\captionof{table}{Case $n=5$}
\end{minipage}
\begin{minipage}[c]{0.5\textwidth}
\centering
\begin{tabular}{c|ccccccc} 
$\cdot$&0&1&2&3&4&5&6\\\hline
0& 0& 0& 0& 0& 0& 0&0\\
1& 1& 0& 0& 0& 0& 0&0\\
2& 2& 1& 0& 0& 0& 0&0\\
3& 3& 1& 1& 0& 0& 0& 0\\
4& 4& 2& 1& 1& 0& 0& 0\\
5& 5& 3& 2& 1& 1& 0& 1\\
6& 6& 4& 3& 2& 1& 1& 0\\
\end{tabular}
\captionof{table}{Case $n=6$}
\end{minipage}

\begin{proof} Due to the way the algebra $(X'_n,\cdot.0)$ is defined, it directly follows that
$$
(10) \ \ x\leq y \implies n*y\leq n*x.
$$
Additionally,
$$
(11) \ \ x\leq n \implies x*y\leq n*y \ {\rm for\; all} \  y\ne n.
$$
Indeed, for $x\leqslant y$ the last implication is trivial. If $y<x$, then $n=x+k$, $x=y+t$, $n=x+k+t$, $k,t>0$, which for $t=1$ gives $x*y=1\leqslant n*y$ since by the definition $n*y\geqslant  1$ for all $y\ne n$. For $t>1$ we have $x*y=x-y-1=t-1<k+t-1=n+y$, which completes the proof of $(11)$.

In view of Lemma \ref{L-1}, the proof that $(X'_n,\cdot,0)$ is a BCK-algebra can be done by verifying $(1)$ and $(2)$, in the case when at least one element is equal to $n$. Conditions $(3)$, $(4)$ and $(5)$ are satisfied due to the method of the above definition. 

If in $(1)$ one element is $n$ and the second is $0$, or one is $n$ and the other two are equal, $(1)$ is satisfied.

Now, let $x=n$. Then $0<y<z<n$ or $0<z<y<n$. The first case needs to be divided into two subcases: 
\begin{itemize}
    \item $z=y+1$. Then $((n*y)*(z*y))*(n*z)=((n*y)*1)*(n*z)=0$
    if $y=n-2$ or $y=n-3$.
        
				If $y<n-3$, then $((n*y)*1)*(n*z)=((n-y-1)*1)*(n*z)=(n-y-3)*(n*z)=(n*(y+2))*(n*z)=0$, where the last equation follows from (10).
    
    \item $z>y+1$. Then $((n*y)*(z*y))*(n*z)=((n*y)*(z-y-1))*(n*z)=
           ((n-y-1)*(n-y-2))*1=1*1=0$ for $z=n-1$.
     
		For $z<n-1$ we have $((n*y)*(z-y-1))*(n*z)=((n-y-1)*(z-y-1))*(n*z)=(n-y-1-(z-y-1)-1)*(n*z)=(n-z-1)*(n*z)=(n*z)*(n*z)=0$.
\end{itemize}

Let $y=n$. Then $0<x<z<n$ or $0<z<x<n$. In the first case $x*n=0$ and thus $((x*n)*(z*n))*(x*z)=0$. For the second case, if $x=n-1$, then $((x*n)*(z*n))*(x*z)=(1*0)*((n-1)*z)=0$ since $(n-1)*z\neq 0$.

Finally, let $z=n$. Then if $0<x<y<n$, then $((x*y)*(n*y))*(x*n)=0$ because $x*y=0$, and if $0<y<x<n$, then $((x*y)*(n*y))*(x*n)=0$ follows from (11).

This completes the proof of $(1)$. As for $(2)$, the cases when $y=n$ or when $x=n$ and $y\in\{0,n-1,n\}$ are trivial. 
The only remaining case is when $x=n$ and $y\in \{1,2,\dots, n-2\}$, but then $(x\cdot(x\cdot y))\cdot y= (n\cdot(n-(y+1))\cdot y=(n-(n-(y+1)+1))\cdot y=y\cdot y = 0$.

    Thus, $(X'_n,\cdot,0)$ is a BCK-algebra.     
\end{proof}

We can now show that the above construction allows us to give a counterexample to Question 2.

\begin{theorem}
For $m\geq 3$, the variety $V_m$ is not determined by $x_m=y_m$.
\end{theorem}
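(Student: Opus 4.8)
The strategy is to show that the identity $x_m=y_m$ holds in a strictly larger class than $\mathbf{V}_m$, by exhibiting a single BCK-algebra that satisfies $x_m=y_m$ but is not $m$-commutative. The natural candidate is the linearly ordered algebra $(X'_n,\cdot,0)$ built in Lemma~\ref{BCK-Vn} (or the plain $(X_n,*,0)$ of Lemma~\ref{L-1}), with $n$ chosen appropriately relative to $m$. First I would recall that since these algebras are linearly ordered, Dudek's result guarantees the BCK-sequences $(8)$ and $(9)$ are prolongable, so the sequences $x_k,y_k$ are honest weakly decreasing sequences in a finite chain, hence eventually constant; the point will be to compute \emph{where} they stabilise for the worst-case pair $x_0,y_0$ and show that on one hand the stabilised common value is reached by step $m$ for \emph{every} pair (so the identity $x_m=y_m$ holds), while on the other hand there is a pair $x_1\le x_0$ for which the single-sequence iteration $x_k=x_{k-2}\cdot(x_{k-2}\cdot x_{k-1})$ does not satisfy $x_{m-1}=x_m$ (so the algebra is not in $\mathbf{V}_m$).

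The key computation is the behaviour of the operation on the chain. In $(X_n,*,0)$ one has, for $y<x$, $x*y = x-y-1$ when $x-y-1>0$ and $x*y=1$ when $x=y+1$; the crucial feature is the ``$-1$'': iterating $x_k = x_{k-2}\cdot(x_{k-2}\cdot x_{k-1})$ starting from two adjacent large elements produces a sequence that decreases by roughly one at each step but \emph{only every other step}, because $x_{k-1}\le x_{k-2}$ forces $x_{k-2}\cdot x_{k-1}$ to be comparatively small and then $x_{k-2}\cdot(\text{small})$ stays close to $x_{k-2}$. So I would carefully track, for $x_0=n-1$, $y_0=n-2$ (or the analogous top pair in $X'_n$), the exact values $x_1,y_1,x_2,y_2,\dots$ and pin down the first index $N(n)$ at which both sequences have reached their common eventual value. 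The claim to verify is $N(n)\ge m+1$ for a suitable $n=n(m)$ — e.g.\ $n$ linear in $m$ — so that $x_{m-1}\ne x_m$ for this pair, witnessing failure of $m$-commutativity; simultaneously one must check that \emph{no other} pair $x_0\ge y_1\ge x_2\ge\cdots$ and the mixed sequences $(6)$–$(7)$ can fail $x_m=y_m$, i.e.\ that $x_m=y_m$ already for all pairs at index $m$.

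Here a cleaner route is available: rather than check all pairs by hand, I would use the structural fact recorded in the proof of Lemma~\ref{BCK-Vn} (monotonicity relations $(10)$, $(11)$, and the linear order) to argue that the $(6)$–$(7)$ sequences $x_k,y_k$ for a given pair $x,y$ satisfy $x_k=y_k$ as soon as $\min(x,y)$ has been ``consumed'', and that this happens by step $m$ precisely because the chain $X'_n$ has the right length — whereas the \emph{single}-sequence stabilisation $x_{m-1}=x_m$ can be delayed one extra step by the adjacent-top pair. In fact the sharpest phrasing: $\mathbf{V}_m$ requires $x_{m-1}=x_m$ for the single sequence from \emph{any} pair $x_1\le x_0$, while ``$x_m=y_m$'' is the a priori weaker demand that the two interleaved sequences agree at index $m$; I would show these are genuinely different on $(X'_n,\cdot,0)$ by direct computation of both, for an explicit small $n$ in each residue/size class needed to cover all $m\ge 3$, and then invoke the ``add a top element'' extension trick from the proof of Theorem~1 (which does not disturb the relevant sequences) to promote finitely many examples to all $m\ge 3$. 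The main obstacle is the bookkeeping in the iterated computation of $x_k$ and $y_k$ on the chain: getting the off-by-one exactly right so that the identity $x_m=y_m$ holds on the nose while $m$-commutativity fails — i.e.\ choosing $n(m)$ so that the single-sequence tail is exactly one step longer than the interleaved-sequence tail.
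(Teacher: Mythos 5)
Your overall strategy points in the wrong direction, and in fact in a direction that is provably impossible. You propose to exhibit an algebra that \emph{satisfies} the identity $x_m=y_m$ but is \emph{not} in $\mathbf{V}_m$. Specialise the identity to a comparable pair $y\le x$: in $(6)$ one gets $x_1=y\cdot(y\cdot x)=y\cdot 0=y$, so $(6)$ is exactly the single BCK-sequence $x_0=x,\ x_1=y,\ x_k=x_{k-2}\cdot(x_{k-2}\cdot x_{k-1})$, while $(7)$ gives $y_0=x_1$, $y_1=x_0\cdot(x_0\cdot x_1)=x_2$, and inductively $y_k=x_{k+1}$. Hence the instance of $x_m=y_m$ at a comparable pair is literally the condition $x_m=x_{m+1}$ for an arbitrary single sequence, so any algebra satisfying the identity for all $x,y$ automatically lies in $\mathbf{V}_m$; a counterexample of the kind you are after cannot exist. (Note also that membership in $\mathbf{V}_m$ is the condition $x_m=x_{m+1}$, not $x_{m-1}=x_m$ as in your plan, and that $(X'_n,\cdot,0)$ is \emph{not} linearly ordered: $n-1$ and $n$ are incomparable maximal elements, so Dudek's linear-order prolongation result does not apply to it.)

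The only way Traczyk's characterisation can fail — and the way the paper's proof goes — is the opposite one: an algebra that \emph{is} in $\mathbf{V}_m$ yet \emph{violates} $x_m=y_m$ at some incomparable pair. Your own candidate, the algebra of Lemma~\ref{BCK-Vn}, does precisely this rather than what you want it to do. All single sequences in it live inside the chains $X_n$ and $X'_n\setminus\{n-1\}$, and the longest one, started at $x_0=n-1,\ x_1=n-2$, runs $n-1,n-2,\dots,2$ and stabilises at $x_{n-2}=1$; hence the algebra lies in $\mathbf{V}_{n-2}$. But for the incomparable pair $x=n-1$, $y=n$ one computes $x_1=n\cdot(n\cdot(n-1))=n\cdot 1=n-2$, and then $x_k=n-1-k$ down to $x_{n-2}=1$, while $y_1=(n-1)\cdot((n-1)\cdot n)=n-3$ and $y_k=n-3$ for all $k\ge 1$; thus $x_{n-2}=1\ne n-3=y_{n-2}$, and taking $m=n-2\ge 3$ proves the theorem. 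So the identity you hoped to verify on this algebra actually fails there for every $m\ge 3$, and the delicate bookkeeping you anticipate (choosing $n(m)$ so that the identity holds ``on the nose'' while $m$-commutativity fails) has no chance of succeeding; the proof must instead establish membership in $\mathbf{V}_{n-2}$ and exhibit the violating incomparable pair.
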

\begin{proof}
We will prove it by showing that for every $n\geq 5$ the BCK-algebra of order $n+1$ defined in Lemma \ref{BCK-Vn} belongs to the variety $\mathbf{V}_{n-2}$, but there exists $x,y$ such that $x_{n-2} \neq y_{n-2}$.

Firstly, we will show that this BCK-algebra belongs to $\mathbf{V}_{n-2}$. From Lemma \ref{BCK-Vn}, $X_n$ and $X'_n-\{n-1\}$ are isomorphic linearly ordered BCK-algebras and thus the longest possible sequence (of different elements) which we can obtain occurs when $x_0=n-1$ and $x_1=n-2$. In that case $x_2=n-3$, $x_3=n-4, \dots, x_{n-3}=2$, $x_{n-2}=1=x_{n-1}$. In any other case, we will also have $x_{n-2}=x_{n-1}$ due to the linearity and the length of those sequences. That shows that this BCK-algebra indeed belongs to $\mathbf{V}_{n-2}$.

Now, let us see what happens with sequences (6) and (7) in case $x=n-1$, $y=n$. Then $x_1 = y\cdot(y\cdot x) = n\cdot (n\cdot (n-1))=n\cdot 1 = n-2$, $x_2=x\cdot(x\cdot x_1)$ $ = (n-1)\cdot ((n-1)\cdot (n-2)) = (n-1)\cdot 1 = n-3, \dots, x_{n-3}=2, \;x_{n-2}=1$, but $y_1= x\cdot(x\cdot y)= (n-1)\cdot ((n-1)\cdot n)= (n-1)\cdot 1 = n-3, \; y_2 = y\cdot(y\cdot y_1)=n\cdot (n\cdot(n-3))=n\cdot 2 = n-3, \dots, y_{n-2} = n-3$, and obviously $n-3\neq 1$ for $n\geq 5$, meaning $x_{n-2} \neq y_{n-2}$ for those sequences, which completes the proof.
\end{proof}

\section*{Conclusion}
This paper shows that although prolonging BCK-sequences is possible in some special cases, as shown in \cite{Unsolved}, it is not possible in general. It also shows that the variety $V_n$ is not generated by the identity $x_n=y_n$. This solves both open problems posed by Traczyk in \cite{Traczyk88}.

\section*{Acknowledgement}
I would like to extend my deepest gratitude to Wiesław A. Dudek for bringing those open problems to my attention, as well as for his guidance in writing this paper.
Special thanks to Michael Kinyon, who was the first person to give the counterexample to Question 2 in the case of $\mathbf{V}_3$.



{\tiny DEPARTMENT OF MATHEMATICAL SCIENCES, NORWEGIAN UNIVERSITY OF SCIENCE AND TECHNOLOGY, NO7491 TRONDHEIM, NORWAY

\textsl{E-mail address: zelden99@zohomail.eu}}
\end{document}